\documentclass[12pt]{article}
\usepackage{amsfonts}
\usepackage{latexsym}
\usepackage{cite}
\usepackage{amsmath,amsfonts,latexsym,amssymb}
\usepackage[mathscr]{eucal}
\usepackage{cases}
\usepackage{amsthm}

\usepackage[bf,small]{caption2}
\usepackage{float}
\usepackage{graphicx}
\usepackage{amsmath}
\usepackage{amssymb}
\usepackage[all]{xy}

\newtheorem{theorem}{theorem}[section]
\newtheorem{thm}[theorem]{Theorem}
\newtheorem{lem}[theorem]{Lemma}

\newtheorem{rmk}[theorem]{Remark}

\begin{document}

\title{\textbf{Applying TQFT to count regular coverings of Seifert 3-manifolds}}
\author{\Large Haimiao Chen
\date{}
\footnote{Email: \emph{chenhm@math.pku.edu.cn}}\\
\normalsize \em{Department of mathematics, Peking University, Beijing, China}}
\maketitle

\begin{abstract}
I give a formula for computing the number of regular $\Gamma$-coverings of closed orientable Seifert 3-manifolds,
for a given finite group $\Gamma$. The number is computed using a 3d TQFT with finite gauge group, through
a cut-and-glue process.
\end{abstract}

\section{Introduction}
The purpose of this article is to count regular coverings of closed orientable Seifert 3-manifolds, with a given finite
covering group $\Gamma$; the problem is the same as counting homomorphisms from the fundamental groups to $\Gamma$.

The main results are the formulae in Theorem \ref{thm:main1} and \ref{thm:main2}. They give an answer in terms of the
conjugacy classes of $\Gamma$, the centralizers of elements of these classes and their characters.

A \emph{Seifert 3-manifold} is a compact 3-manifold together with a decomposition into a disjoint union of circles
(called \emph{fibers}) such that, each fiber has a tubular neighborhood that is the mapping torus of an automorphism of a
disk given by rotation by an angle of $2\pi b/a$ for a pair of coprime integers $(a,b)$ with $a>0$. A fiber with $a=1$
(resp. $a>1$) is called \emph{ordinary} (resp. \emph{exceptional}). The set of fibers forms a 2-dimensional orbifold called the
\emph{base-surface}. There are two types of connected closed orientable Seifert 3-manifolds according to whether the
base-surface is orientable or not.

Seifert 3-manifolds form an important class of 3-manifolds. Most ``small" 3-manifolds are Seifert manifolds, and they
account for all compact oriented manifolds in 6 of 8 Thurston geometries of the Geometrization Conjecture. As
interesting examples of Seifert 3-manifolds, there are Brieskorn complete intersections (see \cite{Seifert}) which
include homology spheres as a subclass, the complements of torus knots in $S^{3}$ (see \cite{Low} Page 28), and so on.

The problem of enumerating finite-fold coverings of manifolds has been studied extensively in the past decades, especially
in the last twenty years. For example, \cite{RS} gave formulae for regular coverings of surfaces,
both orientable and non-orientable, both with and without boundary. There was much work on the realizability
of branched coverings of surfaces, see, for instance, \cite{Reali2}, \cite{Reali}; the question is then whether the
number of coverings of a certain kind is zero or not. \cite{enum} counted the numbers of homomorphisms from the fundamental groups 
of circle bundles over surfaces to permutation groups, so it actually enables us to count ordinary unbranched coverings 
of circle bundles over surfaces.

Generally speaking, the enumeration of isomorphism classes of (not necessarily connected) coverings of a space $M$ is
reduced to that of homomorphisms from $\pi_{1}(M)$ to a finite group. Namely, $n$-sheeted ordinary coverings of $M$
correspond bijectively to homomorphisms $\pi_{1}(M)\rightarrow S_{n}$ ($S_{n}$ is the permutation group on $n$ letters);
for a finite group $\Gamma$, regular $\Gamma$-coverings of $M$ correspond bijectively to homomorphisms
$\pi_{1}(M)\rightarrow\Gamma$. For more details see \cite{AT}.

While there has been much work on enumerations in 2-dimensional topology, there is, besides \cite{enum}, little on
3-dimensional topology, due to the complexity of 3-manifold groups.

Our results make some contributions to 3-dimensional enumeration. The approach uses TQFT with finite gauge group.
Just as V.Turaev (\cite{Turaev1},\cite{Turaev2}) applied 2d TQFT to count representations of surface groups, we
apply 3d TQFT to count representations of the fundamental groups of Seifert 3-manifolds. Compared with \cite{enum},
we go further in two directions: the manifolds are allowed to have exceptional fibers, so they can be more
complicated than just circle bundles; the target group may be any finite group, not just a permutation group.

The article is organized as follows. In Section 2, some basic notions and facts of TQFT are recalled. Section
3 and 4 are devoted to deriving formulae for enumeration. Some concrete computations are done in Section 5. The
last section contains some remarks.

\vspace{4mm}

\textbf{Notations.}

$\Sigma_{g}$: the orientable closed surface of genus $g$.

$\Sigma_{g;p,q}$: the orientable surface of genus $g$, with boundary $(p+q)$ circles, $p$ of which are oriented
negatively, and the other $q$ positively.

$P$: the pair of pants

$ST$: the solid torus $S^{1}\times D$

$\#A$: the cardinality of a finite set $A$.

$C(x)$: the centralizer of an element $x$ in a group.

\section{TQFT with finite gauge group}

In this section, we recall some notions and facts on TQFT. References are \cite{TC}, \cite{DW}, \cite{TQFT}, \cite{Freed}. 
Assume all manifolds are compact, smooth and oriented, and all maps are orientation-preserving
diffeomorphisms.

\subsection{Axioms of TQFT}
For an integer $l$, as proposed by Freed (see \cite{Freed}), an \emph{$(l+1)$-dimensional topological quantum field
theory} ($(l+1)$d TQFT for short) is an assignment $Z$, assigning to each $l$-dimensional closed manifold $Y$ a
finite-dimensional Hermitian inner product space $Z(Y)$, with $Z(\emptyset)=\mathbb{C}$ (equipped with the
standard inner product), and to each $(l+1)$-dimensional manifold $X$ an element of the vector space $Z(\partial X)$,
such that the following holds:

(a) (Functoriality) Every map $f:Y\rightarrow Y'$ induces an isometry
$$f_{\ast}:Z(Y)\rightarrow Z(Y');$$
and for $f:X\rightarrow X',f':X'\rightarrow X''$,
$$f'_{\ast}\circ f_{\ast}=(f'\circ f)_{\ast}.$$

For every map $F:X\rightarrow X'$, one has
$$(\partial F)_{\ast}(Z(X))=Z(X').$$

(b) (Orientation) There is a natural isometry
$$Z(-Y)\cong\overline{Z(Y)},$$
($-Y$ has the same underlying manifold as $Y$ but the opposite orientation) through which
$$Z(-X)=\overline{Z(X)}.$$

(c) (Multiplicativity) There is a natural isometry
$$Z(Y_{1}\sqcup Y_{2})\cong Z(Y_{1})\otimes Z(Y_{2}),$$
through which
$$Z(X_{1}\sqcup X_{2})=Z(X_{1})\otimes Z(X_{2}).$$

(d) (Gluing) If $Y^{l}\hookrightarrow X^{l+1}$ is a submanifold and $X^{cut}$ is the manifold obtained by
cutting $X$ along $Y$, so that $\partial X^{cut}=\partial X\sqcup Y\sqcup -Y$, then
$$Z(X)=Tr_{Y}(Z(X^{cut})),$$
where
$$Tr_{Y}:Z(\partial X)\otimes Z(Y)\otimes\overline{Z(Y)}\rightarrow Z(\partial X)$$
is the contraction using the inner product on $Z(Y)$.

\vspace{4mm}

It follows from (b),(c) that when $W$ is a cobordism from $M_{1}$ to $M_{2}$, that is, $\partial W=-M_{1}\sqcup M_{2}$,
then
$$Z(W)\in\overline{Z(M_{1})}\otimes Z(M_{2})\cong\hom(Z(M_{1}),Z(M_{2})),$$
hence $Z(W)$ can be identified with a linear map $Z(M_{1})\rightarrow Z(M_{2})$. The axioms (a),(d) then tell us that
$Z$ is also a functor from the category of $(l+1)$-dimensional cobordisms to that of inner product spaces.

Also, (b) says that $Z(-W):\overline{Z(M_{2})}\rightarrow\overline{Z(M_{1})}$ is the dual map of $Z(W)$.
\begin{rmk}
$Z(X),Z(Y)$ are called the \emph{path-integrals} of $X,Y$, respectively.
\end{rmk}
\begin{rmk}
The gluing axiom (d) also means that, when $\partial X_{1}=-Y,\partial X_{2}=Y$, and $X$ is obtained by gluing
$X_{1},X_{2}$ along $Y$ via a map $f:Y\rightarrow Y$, i.e., $X=(X_{1}\sqcup X_{2})/(y\in\partial X_{1})
\sim (f(y)\in\partial X_{2})$, then
$$Z(X)=Tr_{Y}(Z(X_{2})\otimes f_{\ast}Z(X_{1})).$$
\end{rmk}

\subsection{TQFT with finite gauge group}

Given a finite group $\Gamma$ and an integer $l$, there is a general method to construct an $(l+1)$d TQFT as follows;
see \cite{TQFT}, \cite{Freed}.

For a manifold $M$, let $\mathfrak{C}_{M}$ denote the groupoid of principal $\Gamma$-bundles over $M$ with morphisms
being the bundle morphisms covering the identity on $M$. Let $\overline{\mathfrak{C}}_{M}$ be the set of equivalence classes
of these bundles.

For $P\in\mathfrak{C}_{M}$, setting
$$\mu(P)=\frac{1}{\#\text{Aut}(P)}$$
defines a measure on $\mathfrak{C}_{M}$,
and it descends to $\overline{\mathfrak{C}}_{M}$, by $\mu([P])=\frac{1}{\#\text{Aut}(P)}$ for any $P\in
[P]\in\overline{\mathfrak{C}}_{M}$.

For $Q\in\mathfrak{C}_{\partial M}$, let $\mathfrak{C}_{M}(Q)$ be the groupoid of bundles $P\in\mathfrak{C}_{M}$
such that $\partial P=Q$, with morphisms being the bundle morphisms whose restriction to $Q$ is the identity.

Let $\overline{\mathfrak{C}}_{M}(Q)$ be the set of equivalence classes. As above, one can define a measure on
$\overline{\mathfrak{C}}_{X}(Q)$ by
$$\mu_{Q}([P])=\frac{1}{\#\text{Aut}_{Q}(P)},$$
where $\text{Aut}_{Q}(P)$ is the group of automorphisms of $P$ fixing $Q$.

For each closed $l$-dimensional manifold $Y$, set
$$Z(Y)=\text{Map}(\overline{\mathfrak{C}}_{Y},\mathbb{C}),$$
and equip it with the inner product
$$(f,g)=\sum\limits_{[Q]\in\overline{\mathfrak{C}}_{Y}}\mu([Q])f([Q])\overline{g([Q])}.$$

For each $(l+1)$-dimensional manifold $X$, set
$$Z(X)([Q])=\sum\limits_{[P]\in\overline{\mathfrak{C}}_{X}(Q)}\mu_{Q}([P])\in\mathbb{C}$$ for all
$[Q]\in\overline{\mathfrak{C}}_{\partial X}$, where $Q$ is any representative of $[Q]$. Thus $Z(X)\in Z(\partial X)$.

\vspace{4mm}

A more concrete description was given in Section 5 of \cite{Freed}. Here we state it as a theorem.
\begin{thm} \label{thm:DW}
Suppose $X^{l+1},Y^{l}$ are connected.
\begin{align}
Z(Y)=\text{Map}(\hom(\pi_{1}(Y),\Gamma)/\Gamma,\mathbb{C}),
\end{align}
where $\Gamma$ acts by conjugation; for $\gamma:\pi_{1}(Y)\rightarrow\Gamma$, $\mu([\gamma])=\frac{1}{\#C(\gamma)}$,
with $C(\gamma)$ being the centralizer of $im\gamma\subset\Gamma$.

If $X$ is closed,
\begin{align}
Z(X)=\frac{1}{\#\Gamma}\cdot\#\hom(\pi_{1}(X),\Gamma). \label{eq:Z(closed)}
\end{align}
And if $\partial X\neq\emptyset$, for $\beta\in\hom(\pi_{1}(\partial X),\Gamma)$,
\begin{align}
Z(X)([\beta])=\#(\iota^{\ast})^{-1}(\beta), \label{eq:Z(open)}
\end{align}
where $\iota^{\ast}:\hom(\pi_{1}(X),\Gamma)\rightarrow\hom(\pi_{1}(\partial X),\Gamma)$ is the restriction map.
\end{thm}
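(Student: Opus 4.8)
The plan is to connect the abstract bundle-theoretic definition of $Z$ given just before the statement with the classification of principal $\Gamma$-bundles by homotopy classes of maps to $B\Gamma$, i.e.\ with conjugacy classes of homomorphisms from the fundamental group. First I would recall the standard fact that for a connected space $M$ with base-point, isomorphism classes of principal $\Gamma$-bundles over $M$ (with $\Gamma$ finite, hence discrete) are in natural bijection with $\hom(\pi_1(M),\Gamma)/\Gamma$: a bundle is determined by its monodromy representation, and a change of base-point or trivialization conjugates this representation. Under this bijection, if $P$ corresponds to $[\gamma]$, then $\mathrm{Aut}(P)$ — bundle automorphisms covering the identity — is identified with the centralizer $C(\gamma)=\{g\in\Gamma: g\gamma(x)g^{-1}=\gamma(x)\ \forall x\}$, because such an automorphism is a $\Gamma$-equivariant self-map of the total space, equivalently a global choice of element of $\Gamma$ commuting with the whole monodromy. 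This gives both displayed formulas for $Z(Y)$ and $\mu([\gamma])=1/\#C(\gamma)$ at once.

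Next, for a closed connected $X^{l+1}$, $\partial X=\emptyset$ so $\overline{\mathfrak C}_X(Q)=\overline{\mathfrak C}_X$ and the definition reads $Z(X)=\sum_{[P]}1/\#\mathrm{Aut}(P)$. I would rewrite this sum over isomorphism classes as a "groupoid cardinality": $\sum_{[\gamma]\in\hom(\pi_1(X),\Gamma)/\Gamma}\frac{1}{\#C(\gamma)}$. Since $\Gamma$ acts on the finite set $\hom(\pi_1(X),\Gamma)$ by conjugation with stabilizer of $\gamma$ equal to $C(\gamma)$, the orbit-stabilizer theorem gives $\#(\text{orbit of }\gamma)=\#\Gamma/\#C(\gamma)$, hence $\sum_{[\gamma]}\frac{1}{\#C(\gamma)}=\frac{1}{\#\Gamma}\sum_{[\gamma]}\#(\text{orbit of }\gamma)=\frac{1}{\#\Gamma}\#\hom(\pi_1(X),\Gamma)$, which is exactly \eqref{eq:Z(closed)}.

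For the case $\partial X\neq\emptyset$, fix $\beta\in\hom(\pi_1(\partial X),\Gamma)$ representing a class $[\beta]$; the bundle $Q$ over $\partial X$ corresponding to $\beta$ comes with a fixed trivialization data, and $Z(X)([\beta])=\sum_{[P]\in\overline{\mathfrak C}_X(Q)}\frac{1}{\#\mathrm{Aut}_Q(P)}$. The key point is that a bundle $P$ over $X$ with $\partial P=Q$, together with the rigidification along the boundary, corresponds precisely to an honest homomorphism $\rho:\pi_1(X)\to\Gamma$ — not just a conjugacy class — whose restriction $\iota^*\rho$ equals $\beta$ on the nose, and that $\mathrm{Aut}_Q(P)$, automorphisms fixing the boundary, is trivial whenever $\beta$ is (as we may assume after reduction) supported so that the boundary meets every component; more carefully, one shows $\#\mathrm{Aut}_Q(P)$ is constant and the weighted sum collapses to the plain count $\#(\iota^*)^{-1}(\beta)$, giving \eqref{eq:Z(open)}. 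I expect the main obstacle to be this last bookkeeping: handling possibly disconnected boundary, checking that the boundary trivialization really does kill the automorphism group (or that its order cancels correctly against the over/under-counting of boundary conjugacy classes versus boundary homomorphisms), and making the identification $\overline{\mathfrak C}_X(Q)\cong(\iota^*)^{-1}(\beta)$ fully precise. All of this is essentially the content of Section 5 of \cite{Freed}, so I would cite that for the delicate normalization and only sketch the identifications here.
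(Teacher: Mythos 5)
The paper does not actually prove this theorem — it is stated as a quotation from Section 5 of \cite{Freed} — so there is no internal argument to compare against; your sketch is the standard proof and is essentially sound. Your treatment of the first claim (monodromy classification of principal $\Gamma$-bundles, $\mathrm{Aut}(P)\cong C(\gamma)$) and of (\ref{eq:Z(closed)}) (groupoid cardinality plus orbit–stabilizer) is complete and correct. For (\ref{eq:Z(open)}), however, the worry you flag about disconnected boundary is not mere bookkeeping: with the only natural reading of $\iota^{\ast}$, the literal formula fails already for the cylinder $X=S^{1}\times[0,1]$, where the weighted count of bundles rel $Q$ gives $\#\{g\in\Gamma\mid g\beta_{2}g^{-1}=\beta_{1}\}$ (exactly the kernel of the identity cobordism on $Z(S^{1})$, as the inner product $\mu([x])=1/\#C(x)$ requires), whereas $\#(\iota^{\ast})^{-1}(\beta_{1},\beta_{2})$ gives $\delta_{\beta_{1},\beta_{2}}$; the general statement in Freed--Quinn keeps track of basepoint data on each boundary component, so the sum does not simply ``collapse to the plain count'' when $\partial X$ is disconnected. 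Your argument does go through verbatim in the only situations where the paper invokes (\ref{eq:Z(open)}), namely connected $X$ with connected boundary and basepoint chosen on the boundary (the solid torus $ST$ and the circle bundle $F_{g}$, each with $\partial X=\Sigma_{1}$): there an isomorphism fixing $Q$ pointwise must have value $e$ at the basepoint, hence $\mathrm{Aut}_{Q}(P)$ is trivial, and rel-$Q$ isomorphism classes biject with homomorphisms $\pi_{1}(X)\rightarrow\Gamma$ restricting to $\beta$ on the nose. So your proposal is acceptable — indeed it supplies more of an argument than the paper does — provided you either restrict (\ref{eq:Z(open)}) to connected boundary or state the disconnected-boundary refinement explicitly instead of asserting it in general.
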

\begin{rmk}
Such a TQFT is known as (untwisted) \emph{Dijkgraaf-Witten theory}, named by the authors of \cite{DW}, who first proposed it.
\end{rmk}

\subsection{(2+1)-dimensional DW theory}

When $l=2$, $E:=Z(\Sigma_{1})$ becomes the
vector space of maps
\begin{align}\theta:\{(x,h)\in\Gamma\times\Gamma|xh=hx\}\rightarrow\mathbb{C}
\end{align}
satisfying
\begin{align}
\theta(uxu^{-1},uhu^{-1})=\theta(x,h),\hspace{5mm}\forall u\in\Gamma;
\end{align}
and the inner product is given by
\begin{align}
(\theta_{1},\theta_{2})=\frac{1}{\Gamma}\cdot\sum\limits_{x,h}\theta_{1}(x,h)\overline{\theta_{2}(x,h)}.
\end{align}
By Lemma 5.4 of \cite{Freed}, $E$ has a canonical orthonomal basis $\{\chi_{i}|i\in \Lambda\}$, where
$\Lambda=\{i=(c,\rho)\}$, with $c$ a conjugacy class of $\Gamma$ and $\rho$ an irreducible character of $C(x)$,
the centralizer of $x$ for an arbitrary choice of $x\in c$. We say that $\chi_{i}$ is \emph{supported in the
conjugacy class $c$} and denote
\begin{align}
c=\text{supp}(\chi_{i}).
\end{align}
The Explicit expression of $\chi_{i}$ is
\begin{align}
\chi_{(c,\rho)}(x,h)=\rho(h), \forall x\in c, h\in C(x); \hspace{4mm} \chi_{(c,\rho)}(x,h)=0, \forall x\notin c.
\end{align}

Define
\begin{align}
\dim\chi_{(c,\rho)}=\#c\cdot\rho(e). \label{defn:dim}
\end{align}

\vspace{4mm}

Let $P$ be the pair of pants (see Figure 1). Since $\partial (P\times S^{1})=-\Sigma_{1}\sqcup-\Sigma_{1}\sqcup
\Sigma_{1}$, we have
$$Z(P\times S^{1})\in \overline{E}\otimes\overline{E}\otimes E\cong\hom(E\otimes E,E),$$
so it gives a product on $E$,
\begin{align}
m:&E\otimes E\rightarrow E, \nonumber    \\
m(\theta\otimes\vartheta)(x,h)&=\sum\limits_{x_{1}x_{2}=x}\theta(x_{1},h)\vartheta(x_{2},h).\label{eq}
\end{align}
The proof of formula (\ref{eq}) is similar to the proof of Proposition 5.17 of \cite{Freed}.
\begin{figure}[h]
  \centering
  \includegraphics[width=0.6\textwidth]{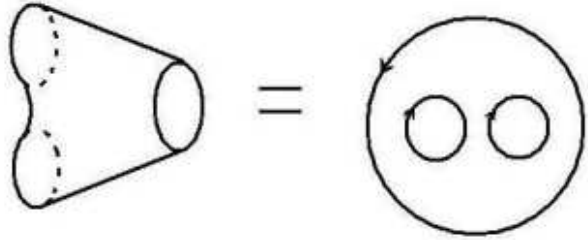}\\
  \caption{the pair of pants}\label{}
\end{figure}

The mapping class group of $\Sigma_{1}$, which is isomorphic to $SL(2,\mathbb{Z})$ (see \cite{Topof3mfd},
Page 22), acts on $E$ by
\begin{align}
\left(\left(\begin{array}{cc}a & b \\ c& d \end{array}\right)_{\ast}\theta\right)(x,h)=\theta(x^{a}h^{b},x^{c}h^{d}).
\end{align}

For the generators
$T=\left(\begin{array}{cc}
1 & 0 \\
1 & 1
\end{array}\right),
S=\left(\begin{array}{cc}
0 & -1 \\
1 & 0
\end{array}\right)$,
the expressions are (see \cite{Freed}, Proposition 5.8)
\begin{align}
T_{\ast}\chi_{i}&=\kappa_{i}\chi_{i},\\
S_{\ast}\chi_{i}&=\sum\limits_{j\in\Lambda}s_{i}^{j}\chi_{j},
\end{align}
where
\begin{align}
\kappa_{i}=\chi_{i}(x,x)/\chi_{i}(x,e),
\end{align}
with $x\in \text{supp}(\chi_{i})$ chosen arrbitrarily, and
\begin{align}
s_{i}^{j}=(S_{\ast}\chi_{i},\chi_{j})=\frac{1}{\#\Gamma}\cdot\sum\limits_{x,h}\chi_{i}(h^{-1},x)\overline{\chi_{j}(x,h)}.
\end{align}
In particular,
\begin{align}
s_{0}^{i}=\frac{\dim\chi_{i}}{\#\Gamma}.
\end{align}

Suppose $m(\chi_{i}\otimes\chi_{j})=\sum\limits_{k}N_{ij}^{k}\chi_{k}$. By Proposition 3.1.12 of \cite{TC}, if we
define the matrices $s,N_{i},D_{i}$ respectively by
$$s_{ij}=s_{i}^{j},\hspace{4mm} (N_{i})_{jk}=N_{ij}^{k},\hspace{4mm} (D_{i})_{ab}=\delta_{ab}\frac{s_{i}^{a}}{s_{0}^{a}},$$
then $sN_{i}s^{-1}=D_{i}$. So setting
\begin{align}
\tau_{i}=S^{-1}_{\ast}\chi_{i}
\end{align}
will diagonalize the product:
\begin{align}
m(\tau_{i}\otimes\tau_{j})=\frac{\delta_{ij}}{s_{0}^{i}}\tau_{i}.
\end{align}
Since $S_{\ast}^{-1}$ is unitary, $\{\tau_{i}|\in\Lambda\}$ is still an orthonormal basis of $E$.

It is easy to see, by dualizing, that $Z(-P\times S^{1})$ gives a coproduct
\begin{align}
&m':E\rightarrow E\otimes E, \nonumber \\
&m'(\tau_{i})=\frac{1}{s_{0}^{i}}\tau_{i}\otimes\tau_{i}.
\end{align}
\vspace{4mm}

For the solid torus which is denoted $ST$, let $h$ indicate the longitude, and $x$ the meridian.

Note that the homomorphism $\pi_{1}(T)\rightarrow\pi_{1}(ST)$ induced by the inclusion $T\hookrightarrow ST$ is
a surjection whose kernel is generated by $x$, so we have
$$Z(ST)(x,h)=\delta_{x,e},\hspace{4mm}(S_{\ast}Z(ST))(x,h)=\delta_{h,e},$$
and
\begin{align}
Z(ST)=\sum\limits_{i\in\Lambda}s_{0}^{i}\tau_{i},
\end{align}
since $(Z(ST),\tau_{i})=(S_{\ast}Z(ST),\chi_{i})=s_{0}^{i}$.

Regarding $Z(-ST)$ as a morphism $E\rightarrow \mathbb{C}$,
\begin{align}
Z(-ST)(\tau_{i})=(\tau_{i},Z(ST))=\overline{s_{0}^{i}}=s_{0}^{i}.
\end{align}

\section{Formula for Seifert 3-manifolds I: \\ orientable base-surfaces}

The strategy for determining $\#\hom(\pi_{1}(M),\Gamma)$ for a 3-manifold $M$ is, relying on Theorem
\ref{thm:DW}, to compute $Z(M)$.

\subsection{Orientable Seifert 3-manifolds}

According to \cite{Topof3mfd}, a closed orientable Seifert 3-manifold $M$ can be obtained as follows. Take a
circle bundle $F$ which is orientable as a manifold, over a surface $R$ with $\partial R=\sqcup_{n}S^{1}$,
so that $\partial F=\sqcup_{n}\Sigma_{1}$, and glue $n$ solid tori onto $F$ along the boundary $\Sigma_{1}$'s,
via diffeomorphisms $f_{j}:\Sigma_{1}\rightarrow\Sigma_{1},j=1,\cdots,n$. The ``closure" of $R$,
$\overline{R}=R\cup(\sqcup_{n}D)$, is the base-surface, and the images of the cores of $ST$,
$S^{1}\times 0\subset ST$, are the exceptional fibers.

When $f_{j}$ lies in the mapping class of $\Sigma_{1}$ represented by
$\left(\begin{array}{cc} a_{j} & b_{j} \\ u_{j} & v_{j} \end{array}\right)$, denote $M$ as
$M(\overline{R};(a_{1},b_{1}),\cdots,(a_{n},b_{n}))$.

In this section we assume $\overline{R}$ is orientable, $R=\Sigma_{g;n,0}$. Then $F$ is diffeomorphic to
$\Sigma_{g;n,0}\times S^{1}$. Denote $M$ as $M_{O}(g;(a_{1},b_{1}),\cdots,(a_{n},b_{n}))$.

\subsection{Computing $Z(\Sigma_{g;p,q}\times S^{1})$}

Since $\Sigma_{1;1,1}=-P\cup_{S^{1}}P$, $Z(\Sigma_{1;1,1}\times S^{1})$ is the composite
\begin{align}
E\xrightarrow[]{m'}E\otimes E\xrightarrow[]{m} E, \hspace{4mm}\tau_{i}\mapsto(s_{0}^{i})^{-2}\tau_{i}.
\end{align}

In general, since $\Sigma_{g;1,1}$ can be obtained by gluing $g$ $\Sigma_{1;1,1}$'s successively, as shown in
Figure 2(a), we have
\begin{align}
Z(\Sigma_{g;1,1}\times S^{1})=(m\circ m')^{g}:E\rightarrow E,\hspace{4mm}\tau_{i}\mapsto(s_{0}^{i})^{-2g}\tau_{i}.
\end{align}

\begin{figure}[h]
  \centering
  \includegraphics[width=0.5\textwidth]{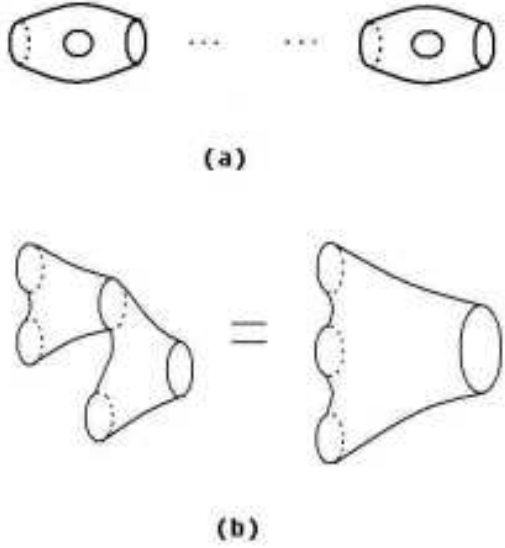}\\
  \caption{(a) $\Sigma_{g;1,1}$ obtained by gluing $\Sigma_{1;1,1}$'s; (b) $\Sigma_{0;p,1}$ obtained by gluing $P$'s}\label{}
\end{figure}

For $p>0$, $\Sigma_{0;p,1}$ can be obtained by gluing $(p-1)$ $P$'s successively, (Figure 2(b) illustrates the
case $p=3$), hence $Z(\Sigma_{0;p,1}\times S^{1}):E^{\otimes p}\rightarrow E$ is equal to
\begin{align}
m &\circ(1\otimes m)\circ\cdots\circ(1\otimes\cdots\otimes 1\otimes m),\nonumber\\
&\tau_{i_{1}}\otimes\cdots\otimes\tau_{i_{p}}\mapsto(s_{0}^{i_{1}})^{1-p}\delta_{i_{1},\cdots,i_{p}}\tau_{i_{1}}.
\end{align}

Dually, for $q>0$, $Z(\Sigma_{0;1,q}\times S^{1}):E\rightarrow E^{\otimes q}$ is equal to
\begin{align}
(1\otimes\cdots\otimes 1\otimes m')\circ\cdots\otimes(1\otimes m')\circ m', \hspace{4mm}
\tau_{i}\mapsto(s_{0}^{i})^{1-q}\tau_{i}^{\otimes q}.
\end{align}

For $p,q>0$, $Z(\Sigma_{g;p,q}\times S^{1}):E^{\otimes p}\rightarrow E^{\otimes q}$ is equal to the composite
\begin{align}
E^{\otimes p}\xrightarrow[]{Z(\Sigma_{0;p,1}\times S^{1})}&E\xrightarrow[]{Z(\Sigma_{g;1,1}\times S^{1})}E\xrightarrow[]
{Z(\Sigma_{0;1,q}\times S^{1})}E^{\otimes q},\nonumber \\
Z(\Sigma_{g;p,q}\times S^{1})(\tau_{i_{1}}\otimes &\cdots\otimes\tau_{i_{p}})=(\frac{1}{s_{0}^{i_{1}}})^{p+q+2g-2}
\delta_{i_{1},\cdots,i_{p}}\tau_{i_{1}}^{\otimes q}.
\end{align}

\subsection{Considering exceptional fibers}

Let $M_{O}'=M_{O}'(g;(a_{1},b_{1}),\cdots,(a_{n},b_{n}))$ be $M_{O}(g;(a_{1},b_{1}),\cdots,(a_{n},b_{n}))$
with an $ST$ deleted. It is obtained by gluing $n$ $ST$'s onto $\Sigma_{g;n,1}\times S^{1}$, using
$f_{j}:\Sigma_{1}\rightarrow\Sigma_{1}, j=1,\cdots,n$.

Since $(f_{j})_{\ast}(Z(ST))(x,h)=\delta_{e,x^{a_{j}}h^{b_{j}}}$, we have
\begin{align}
(f_{j})_{\ast}(Z(ST))=\frac{1}{\#\Gamma}\cdot\sum\limits_{i\in\Lambda}\left(\sum\limits_{xh=hx \atop x^{a_{j}}h^{b_{j}}=e}\overline{\tau_{i}(x,h)}\right)\tau_{i}.
\end{align}
For any pair of coprime integers $(a,b)$, define
\begin{align}
\eta_{i}(a,b)=\sum\limits_{xh=hx \atop x^{a}h^{b}=e}\overline{\tau_{i}(x,h)}.
\end{align}
\begin{lem}
\begin{align}
\eta_{i}(a,b)=\sum\limits_{z\in\Gamma}\overline{\tau_{i}(z^{-b},z^{a})}=\sum\limits_{z\in\Gamma}\chi_{i}(z^{a},z^{-b}).
\end{align}
\end{lem}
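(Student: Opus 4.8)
The plan is to rewrite the constraint set $\{(x,h): xh=hx,\ x^a h^b=e\}$ by parametrizing it with a single group element. Since $\gcd(a,b)=1$, choose integers $s,t$ with $sa+tb=1$ (or the analogous Bézout relation with the appropriate signs). First I would observe that if $x^a h^b=e$ and $x,h$ commute, then the subgroup generated by $x$ and $h$ is abelian, hence cyclic on the two generators in a controlled way: I want to produce a generator $z$ of $\langle x,h\rangle$ (or at least an element $z$) such that $x$ and $h$ are both powers of $z$ with exponents dictated by $a,b$. The natural guess, matching the right-hand side, is to set $z = x^{?}h^{?}$ so that $z^{-b}=x$ and $z^{a}=h$; solving, one needs $z$ with $z\mapsto x$ under "raise to $-b$" and $z\mapsto h$ under "raise to $a$", which is consistent precisely because $x^ah^b=e$ forces the exponent lattice to be $\mathbb{Z}\cdot(-b,a)$ inside $\mathbb{Z}^2$ modulo the relations.

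Concretely, I would argue as follows. Define a map $\Phi:\Gamma\to\{(x,h):xh=hx,\ x^ah^b=e\}$ by $\Phi(z)=(z^{-b},z^a)$. This is well-defined: powers of a fixed $z$ commute, and $(z^{-b})^a(z^a)^b=z^{-ab+ab}=e$. The claim is that $\Phi$ is a bijection. For surjectivity: given $(x,h)$ in the constraint set, let $H=\langle x,h\rangle$, which is finite abelian; using $x^ah^b=e$ and the coprimality of $a,b$, show $H$ is cyclic — indeed $x^ah^b=e$ says the element $x^ah^b$ is trivial, and combined with Bézout one can exhibit a generator. I would pick $z=x^{\beta}h^{\alpha}$ for suitable $\alpha,\beta$ (determined by $\alpha a+\beta b=\pm1$ type relations) and verify $z^{-b}=x$, $z^a=h$ by a direct exponent computation inside the abelian group $H$, repeatedly using $x^ah^b=e$ to reduce. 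For injectivity: if $(z^{-b},z^a)=(w^{-b},w^a)$ then $z^{-b}=w^{-b}$ and $z^a=w^a$, so $z^{\gcd(a,b)}=z^{sa+tb}$ can be rewritten in terms of $w$, giving $z=w$.

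Once $\Phi$ is a bijection, the first equality
\begin{align}
\eta_i(a,b)=\sum_{xh=hx,\ x^ah^b=e}\overline{\tau_i(x,h)}=\sum_{z\in\Gamma}\overline{\tau_i(z^{-b},z^a)}
\end{align}
is immediate by reindexing the sum along $\Phi$. For the second equality, I would invoke the $SL(2,\mathbb{Z})$-action on $E$: the matrix sending $(x,h)$-coordinates appropriately, namely applying the transformation with which $\tau_i=S_*^{-1}\chi_i$ and using the explicit action $((\begin{smallmatrix}a&b\\c&d\end{smallmatrix})_*\theta)(x,h)=\theta(x^ah^b,x^ch^d)$. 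Since $\tau_i = S^{-1}_* \chi_i$ and $S^{-1}=\left(\begin{smallmatrix}0&1\\-1&0\end{smallmatrix}\right)$, we get $\tau_i(x,h)=\chi_i(h^{-1},x)$ — wait, more carefully $\tau_i(x,h) = \chi_i(h, x^{-1})$ or the appropriate variant; I would compute $\tau_i(z^{-b},z^a)$ via this substitution to land on $\chi_i(z^a,z^{-b})$ up to the conjugation-invariance, and then note $\overline{\tau_i(z^{-b},z^a)}=\chi_i(z^a,z^{-b})$ using that $\chi_i$ values are characters evaluated on commuting elements, hence the complex conjugate corresponds to inverting the second argument, which by conjugation-invariance and the cyclic structure can be reabsorbed.

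The main obstacle I anticipate is pinning down the precise exponents in the definition of $z$ and the direction of the Bézout coefficients so that $\Phi$ comes out exactly as $z\mapsto(z^{-b},z^a)$ rather than some sign-twisted variant, and correspondingly matching the $S^{-1}_*$ twist to convert $\overline{\tau_i(z^{-b},z^a)}$ into $\chi_i(z^a,z^{-b})$ without an extra inversion or conjugation discrepancy. This is a bookkeeping issue about orientations and the $SL(2,\mathbb{Z})$-conventions already fixed in the excerpt, but it is where sign errors would creep in; the group-theoretic content (that the commuting solutions of $x^ah^b=e$ are parametrized by a single element when $\gcd(a,b)=1$) is routine once the abelian/cyclic reduction is set up.
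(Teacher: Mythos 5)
Your plan is correct, and it arrives at the same key step as the paper: the bijection between $\Gamma$ and $\{(x,h):xh=hx,\ x^{a}h^{b}=e\}$ given by $z\mapsto(z^{-b},z^{a})$, followed by reindexing the sum and unwinding $\tau_{i}=S^{-1}_{\ast}\chi_{i}$. Where you differ is in how bijectivity is verified. The paper constructs the forward map in closed form: it picks $k$ (the product of the primes dividing $\#\Gamma$ but not $b$) so that $(ka-b,\#\Gamma)=1$, takes $r$ with $r(ka-b)\equiv 1\pmod{\#\Gamma}$, and sends $(x,h)\mapsto(xh^{k})^{r}$, checking this inverts $z\mapsto(z^{-b},z^{a})$. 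You instead produce a preimage directly from a B\'ezout relation $sa+tb=1$, and the bookkeeping you worried about does work out: take $z=x^{-t}h^{s}$; then $hz^{-a}=x^{ta}h^{1-sa}=(x^{a}h^{b})^{t}=e$ and $xz^{b}=x^{1-tb}h^{sb}=(x^{a}h^{b})^{s}=e$, so $z^{a}=h$ and $z^{-b}=x$, while your injectivity computation $z=(z^{a})^{s}(z^{b})^{t}=(w^{a})^{s}(w^{b})^{t}=w$ is fine. Your route is more elementary (no reference to $\#\Gamma$), and it makes the separate claim that $\langle x,h\rangle$ is cyclic unnecessary, since it falls out of the construction of $z$; the paper's version has the mild advantage of exhibiting both mutually inverse maps explicitly. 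For the second equality, the convention issue you flagged resolves as $\tau_{i}(x,h)=\chi_{i}(h,x^{-1})$ (apply the stated $SL(2,\mathbb{Z})$-action to $S^{-1}=\left(\begin{smallmatrix}0&1\\-1&0\end{smallmatrix}\right)$), whence $\overline{\tau_{i}(z^{-b},z^{a})}=\overline{\chi_{i}(z^{a},z^{b})}=\chi_{i}(z^{a},z^{-b})$, using only $\overline{\rho(g)}=\rho(g^{-1})$ for the character $\rho$ of $C(z^{a})$; no appeal to conjugation-invariance is needed.
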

\begin{proof}
since $(a,b)=1$, there exists a $k$ such that $(ka-b,\#\Gamma)=1$, (just let $k$ be the product of all prime
factors of $\#\Gamma$ that do not divide $b$). Take $r$ such that $r(ka-b)\equiv 1\pmod{\#\Gamma}$, then whenever
$hx=xh,x^{a}h^{b}=e$, one has $h=h^{r(ka-b)}=h^{rka}x^{ra}=(xh^{k})^{ra}$; let $y=xh^{k}$, we see
$h=y^{ra},x=yh^{-k}=y^{1-rka}=y^{-rb}$. Note that the map
$$\{(x,h)|xh=hx,x^{a}h^{b}=e\}\rightarrow\Gamma,\hspace{5mm} (x,h)\mapsto (xh^{k})^{r},$$
is bijective, with the inverse map given by $z\mapsto (z^{-b},z^{a})$. So the lemma is established.
\end{proof}
\begin{rmk}  \label{rmk:eta}
Suppose $(a,\#\Gamma)=d$. Taking $c$ such that $ac\equiv d\pmod{\#\Gamma}$, we have
\begin{align}
\eta_{i}(a,b)=\sum\limits_{z^{a/d}=y\in\Gamma}\chi_{i}(z^{a},z^{-b})=\sum\limits_{y\in\Gamma}\chi_{i}(y^{d},y^{-bc}).
\end{align}

In particular, when $(a,\#\Gamma)=1$, $\eta_{i}(a,b)=\dim\chi_{i}\cdot\kappa_{i}^{-bc}$.
\end{rmk}

Going on, $Z(M_{O}')$ is the composite
\begin{align}
\mathbb{C}\cong &\mathbb{C}^{\otimes n}\xrightarrow[]{\bigotimes\limits_{j=1}^{n}((f_{j})_{\ast}Z(ST))}E^{\otimes n}\xrightarrow[]{Z(\Sigma_{g;n,1}\times S^{1})}E. \nonumber \\
Z(M_{O}')(1)&=\sum\limits_{i\in\Lambda}\left(\frac{1}{(\#\Gamma)^{n}(s_{0}^{i})^{n+2g-1}}\cdot\prod\limits_{j=1}^{n}
\eta_{i}(a_{j},b_{j})\right)\tau_{i}. \label{eq:Z(M')}
\end{align}

Finally, $Z(M_{O}(g;(a_{1},b_{1}),\cdots,(a_{n},b_{n})))$ is the composite
\begin{align}
\mathbb{C}\xrightarrow[]{Z(M_{O}')} &E\xrightarrow[]{Z(-ST)}\mathbb{C},\nonumber \\
Z(M_{O}(g;(a_{1},b_{1}),\cdots,(a_{n},b_{n})))&=\sum\limits_{i\in\Lambda}\frac{(\#\Gamma)^{2g-2}}{(\dim\chi_{i})^{n+2g-2}}
\cdot\prod\limits_{j=1}^{n}
\eta_{i}(a_{j},b_{j}).
\end{align}
Thus we have the following
\begin{thm} \label{thm:main1}
The number of regular $\Gamma$-coverings of the Seifert 3-manifold $M_{O}(g;(a_{1},b_{1}),\cdots,(a_{n},b_{n}))$ is
\begin{align}
\sum\limits_{i\in\Lambda}\frac{(\#\Gamma)^{2g-1}}{(\dim\chi_{i})^{n+2g-2}}\cdot\prod\limits_{j=1}^{n}
\eta_{i}(a_{j},b_{j}).
\end{align}
\end{thm}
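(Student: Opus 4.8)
The plan is to assemble the path-integral $Z(M_O)$ out of the pieces already computed in Sections 2 and 3, and then apply the closed-manifold formula \eqref{eq:Z(closed)} to pass from $Z(M_O)$ to $\#\hom(\pi_1(M_O),\Gamma)$. Recall that $M_O := M_O(g;(a_1,b_1),\dots,(a_n,b_n))$ is obtained from $M_O' = M_O \setminus ST$ by gluing back one more solid torus along an ordinary fiber, i.e.\ via the identity map $\Sigma_1 \to \Sigma_1$ (since deleting and regluing a tubular neighborhood of an ordinary fiber introduces no twisting). Since $M_O'$ has boundary a single $\Sigma_1$, and gluing $ST$ back caps it off, the cobordism picture gives $Z(M_O)$ as the composite $\mathbb{C} \xrightarrow{Z(M_O')} E \xrightarrow{Z(-ST)} \mathbb{C}$, which is exactly the displayed composite just before the theorem statement.

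The first step is to justify that composite: $Z(M_O')(1)$ is given by \eqref{eq:Z(M')} from the previous subsection, and $Z(-ST)$ acts on the basis $\{\tau_i\}$ by $Z(-ST)(\tau_i) = s_0^i$ as established at the end of Section 2.3. Feeding \eqref{eq:Z(M')} through $Z(-ST)$ and using linearity yields
\begin{align}
Z(M_O) = \sum_{i\in\Lambda} \frac{s_0^i}{(\#\Gamma)^n (s_0^i)^{n+2g-1}}\cdot\prod_{j=1}^n \eta_i(a_j,b_j) = \sum_{i\in\Lambda}\frac{1}{(\#\Gamma)^n (s_0^i)^{n+2g-2}}\cdot\prod_{j=1}^n \eta_i(a_j,b_j).
\end{align}
The second step is to substitute $s_0^i = \dim\chi_i / \#\Gamma$ (the identity displayed in Section 2.3). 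Then $(s_0^i)^{n+2g-2} = (\dim\chi_i)^{n+2g-2}/(\#\Gamma)^{n+2g-2}$, so the scalar prefactor becomes $(\#\Gamma)^{2g-2}/(\dim\chi_i)^{n+2g-2}$, giving the formula for $Z(M_O)$ stated immediately above the theorem. The third and final step is to apply \eqref{eq:Z(closed)}: since $M_O$ is closed and connected, $\#\hom(\pi_1(M_O),\Gamma) = \#\Gamma \cdot Z(M_O)$, which multiplies the exponent of $\#\Gamma$ by one more, producing $(\#\Gamma)^{2g-1}/(\dim\chi_i)^{n+2g-2}$ and hence the claimed count.

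I should also check a boundary case for honesty: the decomposition of $\Sigma_{g;n,1}$ into pairs of pants and $\Sigma_{1;1,1}$'s used to compute $Z(\Sigma_{g;n,1}\times S^1)$ was stated for $p,q>0$, and here we are using $p=n$, $q=1$; when $n=0$ one instead uses $Z(\Sigma_{g;1,1}\times S^1)$ directly, and the formula $(1/s_0^{i_1})^{p+q+2g-2}\delta_{i_1,\dots,i_p}\tau_{i_1}^{\otimes q}$ degenerates correctly to $(s_0^i)^{2g-1}\tau_i$, so the final answer $\sum_i (\#\Gamma)^{2g-1}/(\dim\chi_i)^{2g-2}$ is still valid (this recovers $\#\hom(\pi_1(\Sigma_g\times S^1),\Gamma)$ up to the exceptional-fiber data). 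The main obstacle is not any single hard computation but rather bookkeeping the exponents correctly through the chain of compositions: one must be careful that the $\Sigma_{g;n,1}\times S^1$ factor contributes $(s_0^i)^{-(n+2g-1)}$ (with $p=n$, $q=1$, so $p+q+2g-2 = n+2g-1$), that each of the $n$ glued solid tori contributes one factor of $\eta_i(a_j,b_j)$ together with $(\#\Gamma)^{-1}$, and that the final cap $Z(-ST)$ contributes one factor of $s_0^i$ while \eqref{eq:Z(closed)} contributes one factor of $\#\Gamma$; assembling these and converting every $s_0^i$ to $\dim\chi_i/\#\Gamma$ is where an arithmetic slip would most easily creep in.
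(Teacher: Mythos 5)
Your proposal is correct and follows essentially the same route as the paper: compose $Z(M_O')$ from \eqref{eq:Z(M')} with the capping functional $Z(-ST)(\tau_i)=s_0^i$, substitute $s_0^i=\dim\chi_i/\#\Gamma$, and then use \eqref{eq:Z(closed)} to convert $Z(M_O)$ into $\#\hom(\pi_1(M_O),\Gamma)$, which is the number of regular $\Gamma$-coverings. The exponent bookkeeping in your computation matches the paper's displayed formula for $Z(M_O)$ exactly.
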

\begin{rmk}
Be careful that, by (\ref{eq:Z(closed)}), $Z(M)$ is the number of homomorphisms $\pi_{1}(M)\rightarrow\Gamma$
divided by $\#\Gamma$.
\end{rmk}

\section{Formula for Seifert 3-manifolds II: \\ non-orientable base-surfaces}

When the base-surface $\overline{R}$ is non-orientable, $\overline{R}$ is the connected sum of some
$\mathbb{R}P^{2}$'s. Denote $\Pi_{g}=\#_{g}\mathbb{R}P^{2}$. Let $\Pi_{g;n}$ be $\Pi_{g}$ with $n$ disks removed.

In the notation of Section 3.1, when $R=\Pi_{g;n}$, denote the Seifert manifold by
$M_{N}(g;(a_{1},b_{1}),\cdots,(a_{n},b_{n}))$.

According to \cite{enum}, over $\Pi_{g;1}$ there is up to isomorphism only one orientable circle bundle $F_{g}$,
whose fundamental group has the presentation
$$\langle x,h,y_{1},\cdots,y_{g}|x=\prod\limits_{j=1}^{g}y_{j}^{2},y_{j}h=hy_{j},h^{2}=e\rangle.$$
Noticing that $\Pi_{g,n}=\Pi_{g,1}\cup_{S^{1}}\Sigma_{0;n,1}$, it is then easy to see that the Seifert manifold $M_{N}(g;(a_{1},b_{1}),\cdots,(a_{n},b_{n}))$
can be obtained by gluing $F_{g}$ with $M'_{O}=M'_{O}(0;(a_{1},b_{1}),\cdots,(a_{n},b_{n}))$ along $\Sigma_{1}$.

Since $\partial F_{g}=\Sigma_{1}$, $Z(F_{g})\in E$. By (\ref{eq:Z(open)}), when $h^{2}\neq e$, $Z(F_{g})(x,h)=0$;
when $h^{2}=e$, $Z(F_{g})(x,h)=\#\{(y_{1},\cdots,y_{g})\in\Gamma^{g}|y_{j}h=hy_{j},\prod\limits_{j=1}^{g}y_{j}^{2}=x\}$.
Thus
\begin{align}
(Z(F_{g}),\tau_{i})&=(S_{\ast}Z(F_{g}),\chi_{i})\nonumber \\
&=\frac{1}{\#\Gamma}\cdot\sum\limits_{xh=xh \atop x^{2}=e}\#\{(y_{1},\cdots,y_{g})\in\Gamma^{g}|h\prod
\limits_{j=1}^{g}y_{j}^{2}=e,y_{j}\in C(x)\}\overline{\chi_{i}(x,h)}.
\end{align}
But by \cite{RS} Theorem 4,
\begin{align}
&\#\{(y_{1},\cdots,y_{g})\in\Gamma^{g}|h\prod\limits_{j=1}^{g}y_{j}^{2}=e,y_{j}\in C(x)\}\nonumber\\
=&(\#C(x))^{g-1}\sum\limits_{\chi}c_{\chi}^{g}\chi(e)^{1-g}\chi(h),
\end{align}
where $\chi$ ranges over the irreducible characters of $C(x)$, and $c_{\chi}$ is the Frobenius-Schur indicator
(see \cite{RS}), which is defined for any character $\rho$ of a group $G$,
\begin{align}
c_{\rho}=\frac{1}{\#G}\cdot\sum\limits_{y\in G}\rho(y^{2}).
\end{align}
Here for $j\in\Lambda$, we modify it to define by taking any $x\in\text{supp}(\chi_{j})$,
\begin{align}
\tilde{c}_{j}=\frac{\delta_{e,x^{2}}}{\#C(x)}\cdot\sum\limits_{h\in C(x)}\chi_{j}(x,h^{2}).
\end{align}

Then
\begin{align}
(Z(F_{g}),\tau_{i})
&=\frac{1}{\#\Gamma}\sum\limits_{x^{2}=e}\sum\limits_{j\in\Lambda\atop x\in\text{supp}(\chi_{j})}\sum\limits_{h\in C(x)}(\#C(x))^{g-1}c_{\chi_{j}}^{g}\chi_{j}(x,e)^{1-g}\chi_{j}(x,h)\overline{\chi_{i}(x,h)} \nonumber\\
&=\frac{\tilde{c}_{i}^{g}}{\#\Gamma}\cdot\sum\limits_{x\in\text{supp}(\chi_{i})}\#C(x)(\frac{\#C(x)}{\chi_{i}(x,e)})^{g-1}
=(\frac{\#\Gamma}{\dim\chi_{i}})^{g-1}\cdot\tilde{c}_{i}^{g}, \label{eq:Z(Fg)}
\end{align}
where we have used the orthogonality relation $\sum\limits_{h\in C(x)}\chi_{j}(x,h)\overline{\chi_{i}(x,h)}\\
=\#C(x)\cdot\delta_{i,j}$, the identity $\#\text{supp}(\chi_{i})\cdot\#C(x)=\#\Gamma$ and (\ref{defn:dim}).

Now $Z(M_{N}(g;(a_{1},b_{1}),\cdots,(a_{n},b_{n})))$ is equal to the composite $$\mathbb{C}\xrightarrow[]{Z(M'_{O})}E\xrightarrow[]{(\cdot,Z(F_{g}))}\mathbb{C}.$$

By (\ref{eq:Z(M')}),(\ref{eq:Z(Fg)}), we obtain
\begin{align}
Z(M_{N}(g;(a_{1},b_{1}),\cdots,(a_{n},b_{n})))=\sum\limits_{i\in\Lambda}\frac{(\#\Gamma)^{g-2}\tilde{c}_{i}^{g}}
{(\dim\chi_{i})^{n+g-2}}\prod\limits_{j=1}^{n}\eta_{i}(a_{j},b_{j}).
\end{align}
Thus the enumeration for regular coverings of Seifert 3-manifolds with non-orientable base-surfaces is completed:
\begin{thm} \label{thm:main2}
The number of regular $\Gamma$-coverings of the Seifert 3-manifold $M_{N}(g;(a_{1},b_{1}),\cdots,(a_{n},b_{n}))$ is
\begin{align}
\sum\limits_{i\in\Lambda}\frac{(\#\Gamma)^{g-1}\tilde{c}_{i}^{g}}{(\dim\chi_{i})^{n+g-2}}\prod
\limits_{j=1}^{n}\eta_{i}(a_{j},b_{j}).
\end{align}
\end{thm}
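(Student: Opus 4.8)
The plan is to realize $M_N(g;(a_1,b_1),\ldots,(a_n,b_n))$ as a gluing of two pieces along a single copy of $\Sigma_1$, apply the functor $Z$, and multiply by $\#\Gamma$ at the end to pass from $Z(M_N)$ to $\#\hom(\pi_1(M_N),\Gamma)$ via (\ref{eq:Z(closed)}). Concretely, since $\Pi_{g;n}=\Pi_{g;1}\cup_{S^1}\Sigma_{0;n,1}$, the Seifert manifold decomposes as $F_g$ glued to $M'_O(0;(a_1,b_1),\ldots,(a_n,b_n))$ along their common boundary $\Sigma_1$, where $F_g$ is the unique orientable circle bundle over $\Pi_{g;1}$ with the stated presentation of $\pi_1$. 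By the pairing form of the gluing axiom, $Z(M_N)$ is the composite $\mathbb{C}\xrightarrow{Z(M'_O)}E\xrightarrow{(\,\cdot\,,Z(F_g))}\mathbb{C}$, so the whole computation reduces to producing the vector $Z(F_g)\in E$ and pairing it against the already-computed $Z(M'_O)(1)$ from (\ref{eq:Z(M')}) (with its parameter $g$ set to $0$).

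First I would compute $Z(F_g)$ using (\ref{eq:Z(open)}): for $\beta=(x,h)\in\hom(\pi_1(\Sigma_1),\Gamma)$ the value $Z(F_g)(x,h)$ counts extensions of $\beta$ to $\pi_1(F_g)$, i.e.\ tuples $(y_1,\ldots,y_g)$ with $y_jh=hy_j$ and $\prod_j y_j^2=x$, subject to $h^2=e$ (and the value is $0$ otherwise). To extract the coefficients in the $\tau_i$-basis I would use $(Z(F_g),\tau_i)=(S_\ast Z(F_g),\chi_i)$, rewrite the counting problem inside $C(x)$ as counting $(y_1,\ldots,y_g)\in C(x)^g$ with $h\prod_j y_j^2=e$, and invoke Theorem 4 of \cite{RS}, which evaluates this count as $(\#C(x))^{g-1}\sum_\chi c_\chi^g\,\chi(e)^{1-g}\chi(h)$, the sum running over irreducible characters $\chi$ of $C(x)$ with Frobenius--Schur indicators $c_\chi$. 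Substituting the explicit form $\chi_{(c,\rho)}(x,h)=\rho(h)$, collapsing the resulting double sum by the orthogonality relation $\sum_{h\in C(x)}\chi_j(x,h)\overline{\chi_i(x,h)}=\#C(x)\cdot\delta_{ij}$, and using $\#\text{supp}(\chi_i)\cdot\#C(x)=\#\Gamma$ together with (\ref{defn:dim}), then yields $(Z(F_g),\tau_i)=(\#\Gamma/\dim\chi_i)^{g-1}\,\tilde c_i^{\,g}$, which is (\ref{eq:Z(Fg)}); the modified indicator $\tilde c_i$ is precisely what packages the $c_\chi$ together with the support constraint $x^2=e$.

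Finally I would combine (\ref{eq:Z(M')}) at $g=0$, namely $Z(M'_O)(1)=\sum_i\big((\#\Gamma)^n(s_0^i)^{n-1}\big)^{-1}\big(\prod_j\eta_i(a_j,b_j)\big)\tau_i$, with the pairing against $Z(F_g)$, using $s_0^i=\dim\chi_i/\#\Gamma$ and the orthonormality of $\{\tau_i\}$; this gives $Z(M_N)=\sum_i (\#\Gamma)^{g-2}\tilde c_i^{\,g}(\dim\chi_i)^{-(n+g-2)}\prod_j\eta_i(a_j,b_j)$, and multiplying by $\#\Gamma$ produces the claimed formula.

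The main obstacle is the bookkeeping in the middle step: correctly accounting for the constraint $h^2=e$ (this is what forces the support condition built into $\tilde c_i$ rather than the naive $c_\chi$), keeping straight whether one works with irreducible characters of $C(x)$ or with the basis vectors $\chi_i=\chi_{(c,\rho)}$ of $E$, and checking that the Frobenius--Schur count of \cite{RS} applies verbatim with $C(x)$ in place of the ambient group and with the extra element $h$ inserted into the word. Once (\ref{eq:Z(Fg)}) is in hand, the remaining manipulation is a routine substitution.
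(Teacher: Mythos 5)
Your proposal is correct and follows essentially the same route as the paper: the decomposition $M_N=F_g\cup_{\Sigma_1}M'_O(0;(a_1,b_1),\cdots,(a_n,b_n))$, the computation of $Z(F_g)$ via (\ref{eq:Z(open)}) and the Frobenius--Schur count of \cite{RS} packaged into $\tilde c_i$, the pairing with (\ref{eq:Z(M')}) at $g=0$, and the final multiplication by $\#\Gamma$ all coincide with the paper's argument, including the constant bookkeeping via $s_0^i=\dim\chi_i/\#\Gamma$.
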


\section{Computations}

Take $\Gamma=A_{5}$, the group of even permutations on 5 elements. It is the smallest nontrivial simple group.

In this section we compute the number of regular $A_{5}$-coverings of any orientable closed Seifert 3-manifold
with orientable base-surface.

\subsection{Facts on representation theory}

The facts on $A_{5}$ which we recall below have been taken from \cite{Artin} (Page 324).

There are 5 conjugacy classes:
$$[1],\hspace{4mm} [\alpha],\hspace{4mm} [\beta],\hspace{4mm} [\gamma],\hspace{4mm} [\gamma^{2}],$$
where $$1=(1),\hspace{4mm }\alpha=(12)(34),\hspace{4mm} \beta=(123),\hspace{4mm} \gamma=(12345),$$
and $[x]$ stands for the conjugacy class represented by $x$.

It is known that
\begin{align}
(\#[1],\#[\alpha],\#[\beta],\#[\gamma],\#[\gamma^{2}])=(1,15,20,12,12).
\end{align}
The corresponding centralizers are
\begin{align*}
C(1)=A_{5}&, \hspace{4mm}
C(\alpha)=\{(1),(12)(34),(13)(24),(14)(23)\}\cong\mathbb{Z}/2\mathbb{Z}\times\mathbb{Z}/2\mathbb{Z}, \\
&C(\beta)=\langle\beta\rangle\cong\mathbb{Z}/3\mathbb{Z}, \hspace{4mm}
C(\gamma)=C(\gamma^{2})=\langle\gamma\rangle\cong\mathbb{Z}/5\mathbb{Z}.
\end{align*}
The irreducible characters of $C(1)=A_{5}$ are given by
\begin{align}
&\rho^{1}_{1}(1,\alpha,\beta,\gamma,\gamma^{2})=(1,1,1,1,1),\\ &\rho^{1}_{2}(1,\alpha,\beta,\gamma,\gamma^{2})=(3,-1,0,\frac{1+\sqrt{5}}{2},\frac{1-\sqrt{5}}{2}),\\ &\rho^{1}_{3}(1,\alpha,\beta,\gamma,\gamma^{2})=(3,-1,0,\frac{1-\sqrt{5}}{2},\frac{1+\sqrt{5}}{2}),\\
&\rho^{1}_{4}(1,\alpha,\beta,\gamma,\gamma^{2})=(4,0,1,-1,-1), \\
&\rho^{1}_{5}(1,\alpha,\beta,\gamma,\gamma^{2})=(5,1,-1,0,0).
\end{align}
The other four centralizers are all abelian, and their irreducible characters are easy to find.

For $C(\alpha)$, set
\begin{align*}
&\rho^{\alpha}_{1}\equiv 1,\hspace{6mm} \rho^{\alpha}_{2}(1,\alpha,(13)(24),(14)(23))=(1,-1,1,-1), \\ \rho^{\alpha}_{3}&(1,\alpha,(13)(24),(14)(23))=(1,1,-1,-1),\hspace{6mm}\rho^{\alpha}_{4}=\rho^{\alpha}_{2}\rho^{\alpha}_{3}.
\end{align*}
For $C(\beta)$, set
\begin{align*}
\rho^{\beta}_{j}(\beta)=e^{2(j-1)\pi i/3}, \hspace{4mm} 1\leqslant j\leqslant 3.
\end{align*}
For $C(\gamma)$, set
\begin{align*}
\rho^{\gamma}_{j}(\gamma)=e^{2(j-1)\pi i/5}, \hspace{4mm} 1\leqslant j\leqslant 5.
\end{align*}
For $C(\gamma^{2})$, set
\begin{align*}
\rho^{\gamma^{2}}_{j}=\rho^{\gamma}_{j}, \hspace{4mm} 1\leqslant j\leqslant 5.
\end{align*}
These are all the irreducible characters.

At last, to be convenient, let
\begin{align}
\Lambda=\{([x],\rho)|x=1,\alpha,\beta,\gamma,\gamma^{2},\rho=\rho^{x}_{j}\}.
\end{align}
And for $i=([x],\rho)\in\Lambda$, define $\chi_{i}\in E$ by putting $\chi_{i}(y,g)=\rho(g)$ if $y$ is
conjugate to $x$ and $g\in C(y)$, and putting $\chi_{i}(y,g)=0$ otherwise.

\subsection{Evaluating $\eta$}

Now begin to evaluate $\eta_{i}(a,b)$ for any pair of coprime integers $(a,b)$. Remark \ref{rmk:eta}
will be referred to repeatedly.

For $1\leqslant j\leqslant 5$ set
\begin{align}
&\omega_{j}(2)=\sum\limits_{h\in [\alpha]}\rho^{1}_{j}(h)=15\rho^{1}_{j}(\alpha),\\
&\omega_{j}(3)=\sum\limits_{h\in [\beta]}\rho^{1}_{j}(h)=20\rho^{1}_{j}(\beta),\\
&\omega_{j}(5)=\sum\limits_{h\in [\gamma]\cup [\gamma^{2}]}\rho^{1}_{j}(h)=12(\rho^{1}_{j}(\gamma)+\rho^{1}_{j}(\gamma^{2})).
\end{align}
Then
\begin{align}
\eta_{([1],\rho^{1}_{j})}(a,b)=\rho^{1}_{j}(1)+\sum\limits_{\{2,3,5\}\ni p|a}\omega_{j}(p). \label{eq:eta}
\end{align}
To see this, note that $\forall h\in\Gamma$, $|h|\in\{1,2,3,5\}$. Suppose $(a,60)=d$, choose an integer $c$ such that
$ac\equiv d\pmod{60}$, as in Remark 3.1. If $2|d$ and $3,5\nmid d$, then
\begin{align*}
\eta_{([1],\rho^{1}_{j})}(a,b)&=\sum\limits_{h^{d}=1}\rho^{1}_{j}(h^{-bc})=\rho^{1}_{j}(1)+\sum\limits_{h\in [\alpha]}\rho^{1}_{j}(h^{-bc})\\&=\rho^{1}_{j}(1)+\sum\limits_{h\in [\alpha]}\rho^{1}_{j}(h)=\rho^{1}_{j}(1)+\omega_{j}(2).
\end{align*}
The other cases are dealt with similarly.

The other values of $\eta_{i}(a,b)$ can be worked out more easily:
\begin{align}
&\eta_{([\alpha],\rho^{\alpha}_{j})}(a,b)=\delta_{1,a\pmod2}\cdot 15\cdot(-1)^{b(1-j)},
\hspace{4mm} 1\leqslant j\leqslant 4,\\
&\eta_{([\beta],\rho^{\beta}_{j})}(a,b)=\delta_{1,a^{2}\pmod3}\cdot 20\cdot(e^{2\pi i/3})^{ab(1-j)},
\hspace{4mm} 1\leqslant j\leqslant 3,\\
&\eta_{([\gamma],\rho^{\gamma}_{j})}(a,b)=\delta_{1,a^{4}\pmod5}\cdot 12\cdot(e^{2\pi i/5})^{a^{3}b(1-j)},
\hspace{4mm} 1\leqslant j\leqslant 5,\\
&\eta_{([\gamma^{2}],\rho^{\gamma^{2}}_{j})}(a,b)=\delta_{1,a^{4}\pmod5}\cdot 12\cdot(e^{2\pi i/5})^{2a^{3}b(1-j)}
\hspace{4mm}
1\leqslant j\leqslant 5.
\end{align}

\subsection{The result}

The values of $\dim\chi_{i}$ for $i\in\Lambda$ are:
\begin{align}
(\dim\chi_{([1],\rho^{1}_{1})},\dim\chi_{([1],\rho^{1}_{2})},\dim &\chi_{([1],\rho^{1}_{3})},
\dim\chi_{([1],\rho^{1}_{4})},\dim\chi_{([1],\rho^{1}_{5})})=(1, 3, 3, 4, 5); \\
\dim&\chi_{([\alpha],\rho^{\alpha}_{j})}=15,\hspace{4mm} 1\leqslant j\leqslant 4;\\
\dim&\chi_{([\beta],\rho^{\beta}_{j})}=20, \hspace{4mm}1\leqslant j\leqslant 3; \\
\dim\chi_{([\gamma],\rho^{\alpha}_{j})}&=\dim\chi_{([\gamma^{2}],\rho^{\gamma^{2}}_{j})}=12,
\hspace{4mm} 1\leqslant j\leqslant 5.
\end{align}
Below we simplify $\pmod p$ into $(p)$.
\begin{align}
&\sum\limits_{j=1}^{4}\frac{(\#\Gamma)^{2g-2}}{(\dim\chi_{([\alpha],\rho^{\alpha}_{j})})^{n+2g-2}}\prod\limits_{k=1}^{n}
\eta_{([\alpha],\rho^{\alpha}_{j})}(a_{k},b_{k})\nonumber\\
=&\sum\limits_{j=1}^{4}4^{2g-2}\left(\prod\limits_{k=1}^{n}\delta_{1,a_{k}(2)}\right)
(-1)^{(j-1)\sum\limits_{k=1}^{n}b_{k}}\nonumber \\
=&4^{2g-1}\delta_{0,\sum\limits_{k=1}^{n}b_{k}(2)}\prod\limits_{k=1}^{n}\delta_{1,a_{k}(2)}. \label{eq:first term}
\end{align}
Similarly,
\begin{align}
\sum\limits_{j=1}^{3}\frac{(\#\Gamma)^{2g-2}}{(\dim\chi_{([\beta],\rho^{\beta}_{j})})^{n+2g-2}}\prod\limits_{k=1}^{n}
\eta_{([\beta],\rho^{\beta}_{j})}(a_{k},b_{k})=3^{2g-1}\delta_{0,\sum\limits_{k=1}^{n}a_{k}b_{k}(3)}
\prod\limits_{k=1}^{n}\delta_{1,a^{2}_{k}(3)},
\end{align}
\begin{align}
\sum\limits_{j=1}^{5}\frac{(\#\Gamma)^{2g-2}}{(\dim\chi_{([\gamma],\rho^{\gamma}_{j})})^{n+2g-2}}
\prod\limits_{k=1}^{n}\eta_{([\gamma],\rho^{\gamma}_{j})}(a_{k},b_{k})=5^{2g-1}\delta_{0,\sum\limits_{k=1}^{n}
a^{3}_{k}b_{k}(5)}\prod\limits_{k=1}^{n}\delta_{1,a^{4}_{k}(5)},
\end{align}
\begin{align}
\sum\limits_{j=1}^{5}\frac{(\#\Gamma)^{2g-2}}{(\dim\chi_{([\gamma^{2}],\rho^{\gamma^{2}}_{j})})^{n+2g-2}}
\prod\limits_{k=1}^{n}\eta_{([\gamma^{2}],\rho^{\gamma^{2}}_{j})}(a_{k},b_{k})=5^{2g-1}
\delta_{0,\sum\limits_{k=1}^{n}a^{3}_{k}b_{k}(5)}\prod\limits_{k=1}^{n}\delta_{1,a^{4}_{k}(5)}.
\end{align}
Using (\ref{eq:eta}) to compute the last term, and noting $\omega_{2}(p)=\omega_{3}(p)$, we have
\begin{align}
&\sum\limits_{j=1}^{5}\frac{(\#\Gamma)^{2g-2}}{(\dim\chi_{([1],\rho^{1}_{j})})^{n+2g-2}}\prod\limits_{k=1}^{n}
\eta_{([1],\rho^{1}_{j})}(a_{k},b_{k})=  \nonumber \\
&60^{2g-2}\prod\limits_{k=1}^{n}\left(1+\sum\limits_{\{2,3,5\}\ni p|a_{k}}\omega_{1}(p)\right)+2\cdot 20^{2g-2}\prod\limits_{k=1}^{n}\left(1+\frac{1}{3}\sum\limits_{\{2,3,5\}\ni p|a_{k}}\omega_{2}(p)\right)+\nonumber \\ &15^{2g-2}\prod\limits_{k=1}^{n}\left(1+\frac{1}{4}\sum\limits_{\{2,3,5\}\ni p|a_{k}}\omega_{4}(p)\right)+12^{2g-2}\prod\limits_{k=1}^{n}\left(1+\frac{1}{5}\sum\limits_{\{2,3,5\}\ni p|a_{k}}\omega_{5}(p)\right).
\label{eq:last term}
\end{align}

Finally, $Z(M_{O}(g;(a_{1},b_{1}),\cdots,(a_{n},b_{n})))$ is the sum of the expressions on the right-hand side of
(\ref{eq:first term})-(\ref{eq:last term}).
Multiplying by 60 gives the number of regular $A_{5}$-coverings of the Seifert 3-manifold $M_{O}(g;(a_{1},b_{1}),\cdots,(a_{n},b_{n}))$.

\section{Further remarks}

\begin{enumerate}
\item As the last section illustrates, we can compute explicitly the number of regular $\Gamma$-coverings of any
closed orientable Seifert 3-manifolds, as long as we know enough about $\Gamma$.

\hspace{4mm} We choose $\Gamma$ to be $A_{5}$ in the example, because it is a non-solvable finite group, whence
beyond the scope of \cite{count}.

\item In \cite{enum} the authors did not deal with exceptional fibers, because they would present additional difficulties
in their approach. In our approach they are easy to deal with, thanks to the cut-and-glue property of TQFT.

\item In principle the same method can be used for computing regular $\Gamma$-coverings of general graph
3-manifolds, which can be obtained by gluing Seifert 3-manifolds along boundary tori. In that case the
$s$-matrix, which can be complicated, will play a key role, because the gluing of the tori depends on
the mapping class group of $\Sigma_{1}$.
\end{enumerate}
\newpage

\end{document}